\documentclass[11pt,a4paper]{article}

\usepackage{amsmath,amsthm,amsbsy,bm,amssymb,enumerate}

\usepackage[latin1]{inputenc}
\usepackage[english]{babel}

\newcommand {\Z}	  {\mathbb{Z}}
\newcommand {\R}	  {\mathbb{R}}
\newcommand {\N}	  {\mathbb{N}}

\renewcommand{\epsilon}{\varepsilon}

\newcommand{\ve}{\boldsymbol}
\newcommand\supp{\mathrm{supp}}
\newcommand{\rank}{\mathrm{rank}}
\newcommand{\conv}{\mathrm{conv}}

\theoremstyle{plain}
\newtheorem{theorem}{Theorem}
\newtheorem{corollary}[theorem]{Corollary}

\title{The Support of Integer Optimal Solutions}
\author{I.~Aliev, J.~De~Loera, F.~Eisenbrand, T.~Oertel, and R.~Weismantel}

\date{\today }

\begin{document}
\maketitle
\begin{abstract}
  \noindent
 The support of a vector is the number of nonzero-components. We show that
  given an integral $m\times n$ matrix $A$, the integer linear optimization problem  $\max\left\{ \ve c^T\ve x : A\ve x = \ve b, \, \ve x\ge\ve 0, \,\ve x \in \Z^n\right\}$
  has an optimal solution whose support is bounded by 
  $2m \, \log (2 \sqrt{m} \| A \|_\infty)$, where $ \| A \|_\infty$ is the largest absolute value of an entry of $A$.  Compared to previous bounds, 
  the one presented here is independent on the objective function. We furthermore provide a nearly matching asymptotic lower bound on the support of optimal solutions. 
\end{abstract}

\section*{Introduction}

\noindent
We consider the  integer optimization  problem in standard form
\begin{equation}
  \label{eq:mainProblem}
  \max\left\{ \ve c^T\ve x : A\ve x = \ve b, \, \ve x\ge\ve 0, \,\ve x \in \Z^n\right\},
\end{equation}
where $A \in \Z^{m×n}$, $\ve b \in \Z^m $ and $\ve c \in \Z^n$.
For convenience, we will assume throughout this paper that Problem~\eqref{eq:mainProblem} is feasible.
We will also assume without loss of generality that the matrix $A$ has full row rank, i.e., $\rank(A)=m$. 

For a vector $\ve x \in \R^n$, let $\supp(\ve x) := \{ i \in \{1,\ldots,n\}: x_i \neq 0\}$ denote the \emph{support} of ${\ve x}$.
The main purpose of this paper is to establish lower and and upper bounds on the minimal size of  support of an optimal 
solution to Problem (\ref{eq:mainProblem}) which are polynomial in $m$ and the largest binary encoding length of an entry of $A$. 
Polynomial support bounds for integer programming \cite{EisenbrandShmonin2006,ADOO} have been successfully used in many areas such as in logic and  complexity, see \cite{kuncak2007towards,kieronski2014two} in the design of efficient polynomial-time approximation schemes \cite{jansen2010eptas,jansen2016closing}, in fixed parameter tractability \cite{knop2017combinatorial,onn2017huge} and they were an ingredient in the solution of cutting stock with a fixed number of item types \cite{goemans2014polynomiality}. These previous bounds however were tailored for the integer feasibility problem only and thus depend on the largest encoding length of a component of the objective function vector if applied to the optimization problem~\eqref{eq:mainProblem}.  
We believe that the observation that we lay out in this paper, namely that these bounds are independent of the objective function, will find  further applications in algorithms and complexity.

Note that the support of an optimal solution of the linear relaxation of \eqref{eq:mainProblem} is well-understood: if we denote by $\ve x^*$ 
a vertex of the polyhedron $P(A, {\ve b}):= \{\ve x \in \R: A\ve x = \ve b, \ \ve x \geq 0 \}$ that corresponds to the linear optimum, then
from the theory of linear programming we know that $\ve x^*$ is determined by a basis $B$ with $\ve x^*_B = A_B^{-1}\ve b$ and
$x^*_i = 0$ for all $i \in N:=\{1,\ldots,n\} \setminus B$. In particular, $\supp(\ve x^*)$ is bounded by $m$ alone. As we will see in this paper (cf. \cite{ADOO}), the story is much
more complicated for integer linear programs.


Throughout this paper $\log(\cdot)$ will denote the logarithm with base two and $\|A\|_\infty$  will denote the maximum absolute entry of the matrix $A$.
The first result of this paper shows that Problem \ref{eq:mainProblem} has an optimal solution with support of size not exceeding a bound that only 
depends on $m$ and $\|A\|_\infty $ and independent of $n$. 
\begin{theorem} [Upper bound on the discrete support function]
\label{thm:UpperBound}
There exists  an optimal solution $\ve z^*$ for Problem~\eqref{eq:mainProblem} such that
\begin{equation}\label{t:Bound_via_determinant_bound1}
|\supp(\ve z^*)|  \le m + \log\left(g^{-1}\sqrt{\det(AA^T)}\right) \le  2m\log(2\sqrt{m}\|A\|_\infty),
\end{equation}
where $g$ denotes the greatest common divisor of all $m \times m$ minors of $A$.
\end{theorem}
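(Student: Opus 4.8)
The plan is to pick an optimal solution that is minimal in a suitably strong sense and then show that a large support would force a nonzero $\{-1,0,1\}$-vector into the kernel of the support submatrix, contradicting that minimality. First, among all optimal solutions of \eqref{eq:mainProblem} I would take $\ve z^*$ whose support $S:=\supp(\ve z^*)$ is smallest; among those, one of smallest $\ell_1$-norm; and among those, the lexicographically smallest one. Put $k:=|S|$, let $A_S\in\Z^{m\times k}$ be the submatrix of columns indexed by $S$, and let $\ve z^*_S\in\Z^k_{>0}$ be the corresponding subvector, so $A_S\ve z^*_S=\ve b$. Assume $\rank(A_S)=m$ (otherwise run the argument with $m$ replaced by $\rank(A_S)\le m$). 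Set $K:=\{\ve t\in\Z^k : A_S\ve t=\ve 0\}$, which is a primitive sublattice of $\Z^k$ of rank $k-m$.

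\emph{Step 1: $K$ contains no nonzero vector of $\ell_\infty$-norm at most $1$.} Suppose $\ve t\in K\setminus\{\ve 0\}$ with $\|\ve t\|_\infty\le 1$. Since $\ve z^*_S\ge\ve 1$, both $\ve z^*_S\pm\ve t$ are nonnegative integer solutions of $A_S\ve x=\ve b$; extending by zeros, they are feasible for \eqref{eq:mainProblem}, so optimality of $\ve z^*$ forces $\ve c_S^T\ve t\le 0$ and $-\ve c_S^T\ve t\le 0$, i.e.\ $\ve c_S^T\ve t=0$, and both $\ve z^*_S\pm\ve t$ are again optimal with support contained in $S$. Minimality of $|S|$ then forces their support to equal $S$, i.e.\ $|t_i|<(z^*_S)_i$ for all $i$; in particular $t_i=0$ whenever $(z^*_S)_i=1$, so $\supp(\ve t)$ lies in the indices where $(z^*_S)_i\ge 2$. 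The two optimal vectors $\ve z^*_S\mp\ve t$ then have $\ell_1$-norms $\|\ve z^*_S\|_1\mp\ve 1^T\ve t$; if $\ve 1^T\ve t\ne 0$ one of them is strictly smaller, contradicting the choice of $\ve z^*$. If $\ve 1^T\ve t=0$ then $\ve t$ has a positive and a negative entry, and whichever of $\ve z^*_S\pm\ve t$ strictly decreases the first coordinate at which $\ve t$ is nonzero is lexicographically smaller than $\ve z^*_S$ while keeping the same support and $\ell_1$-norm, again a contradiction. Hence no such $\ve t$ exists.

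\emph{Step 2: a geometry-of-numbers bound on $k$ and translation to a determinant.} Let $V:=\ker_{\R}(A_S)$, a subspace of $\R^k$ of dimension $k-m$ in which $K$ is a full-rank lattice with covolume $\det(K)$. By Vaaler's theorem on central cube sections, $C:=[-1,1]^k\cap V$ has $(k-m)$-volume at least $2^{k-m}$, so for every $r>(\det K)^{1/(k-m)}$ the body $rC=[-r,r]^k\cap V$ has volume exceeding $2^{k-m}\det(K)$, whence by Minkowski's first theorem it contains a nonzero point of $K$, i.e.\ a nonzero $\ve t\in K$ with $\|\ve t\|_\infty\le r$. If $\det(K)<2^{k-m}$ we could choose $r<2$ and obtain, $\ve t$ being integral, a nonzero $\ve t\in K$ with $\|\ve t\|_\infty\le 1$, contradicting Step 1; therefore $\det(K)\ge 2^{k-m}$, i.e.\ $k\le m+\log\det(K)$. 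To finish I would identify $\det(K)$: since $K$ is primitive, $\det(K)=\det(K^{\perp}\cap\Z^k)$, and the sublattice of $K^{\perp}\cap\Z^k$ spanned by the rows of $A_S$ has index equal to the gcd $g_S$ of the $m\times m$ minors of $A_S$ and covolume $\sqrt{\det(A_SA_S^T)}$, giving $\det(K)=g_S^{-1}\sqrt{\det(A_SA_S^T)}$. As the $m\times m$ minors of $A_S$ are among those of $A$ we have $g\mid g_S$, and Cauchy–Binet yields $\det(A_SA_S^T)\le\det(AA^T)$, so
\[
|\supp(\ve z^*)|=k\ \le\ m+\log\!\big(g_S^{-1}\sqrt{\det(A_SA_S^T)}\big)\ \le\ m+\log\!\big(g^{-1}\sqrt{\det(AA^T)}\big).
\]
For the explicit bound, Hadamard's inequality applied to the positive semidefinite matrix $A_SA_S^T$, whose diagonal entries are at most $k\|A\|_\infty^2$, gives $\det(A_SA_S^T)\le(k\|A\|_\infty^2)^m$; substituting into $k\le m+\tfrac12\log\det(A_SA_S^T)$ produces the self-referential inequality $k\le m+\tfrac m2\log k+m\log\|A\|_\infty$, whose routine analysis yields $k\le 2m\log(2\sqrt m\|A\|_\infty)$.

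The delicate point will be Step 1: support-minimality by itself does not rule out short kernel vectors supported on coordinates where $\ve z^*_S\ge 2$, which is precisely why one must pass to the triply refined optimal solution and argue by cases on the sign of $\ve 1^T\ve t$. The other place requiring care is the normalization in Step 2, where Vaaler's lower bound on the cube section has to be combined with Minkowski's theorem so that $\det(K)<2^{k-m}$ genuinely forces a $\{-1,0,1\}$-vector into $K$; the identification $\det(K)=g_S^{-1}\sqrt{\det(A_SA_S^T)}$ and the final transcendental estimate are then bookkeeping.
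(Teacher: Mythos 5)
Your proposal is correct and follows essentially the same route as the paper: pass to a minimal-support optimal solution, restrict to the support submatrix, use the Vaaler/Bombieri--Vaaler bound to extract a nonzero $\{-1,0,1\}$-vector $\ve y$ in the kernel when the support is too large, deduce $\ve c^T\ve y=0$ from feasibility of $\ve z^*\pm\ve y$, and then perturb to shrink the support (the paper cites \cite[Theorem~2]{BombVaal} and two lemmas of \cite{ADOO} for the steps you rederive via Minkowski's first theorem, Cauchy--Binet, and Hadamard). The one substantive divergence is how the contradiction is closed: where you impose $\ell_1$- and lexicographic tie-breaking to rule out kernel vectors supported on coordinates with $z^*_i\ge 2$, the paper keeps only support-minimality and instead adds $\lambda\ve y$ with $\lambda=\min\{z^*_i: y_i<0\}$, which zeroes out a coordinate directly; both arguments are valid, the paper's is a little shorter.
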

This theorem  cannot be directly derived from the statements in \cite[Theorem 1]{EisenbrandShmonin2006} and \cite[Theorem~1.1]{ADOO} for the following reason.
To guarantee optimality of ${\ve y^*}$, the objective function vector $\ve c$ would have to become part of the constraint matrix (see \cite[Corollary~1.3]{ADOO}).  
This, however, does not allow us to bound the support solely in terms of $A$. The resulting bound would have to depend also on $\ve c$.

Our strategy to prove Theorem \ref{thm:UpperBound} is based on a refinement of the proofs for results established in  \cite[Theorem 1]{EisenbrandShmonin2006} and \cite[Theorem~1.1]{ADOO}.
Specifically, in  \cite[Theorem 1]{EisenbrandShmonin2006} it was shown that there exists an integer point $\ve y^*\in P(A, {\ve b})$ 
with support bounded by $ 2m \log(4m \|A\|_\infty).$ This bound was recently improved in \cite[Theorem~1.1]{ADOO} to
\begin{equation}\label{eq:feasibilityBound}
|\supp(\ve y^*)| \le m + \log\left(g^{-1}\sqrt{\det(AA^T)}\right)\,. 
\end{equation}
%
Notice that an upper bound on the support of an optimal solution in the form of a function in $m$ cannot be expected. 
This would imply that, for fixed  $m$, one can optimize in polynomial  time by first guessing the support of an optimal 
solution and then finding a solution using Lenstra's algorithm for integer programming in fixed dimension \cite{Lenstra83}. However, also for fixed $m$, the optimization problem~\eqref{eq:mainProblem} is $NP$-hard \cite{MR519066}. 

The next result of this paper implies that the upper bound established in Theorem \ref{thm:UpperBound} is nearly optimal.
In fact, we consider a more general setting, where we drop the nonnegativity constraints and estimate from below the size of support 
of all integer solutions to a suitable underdetermined system $A{\ve x}={\ve b}$.
The obtained bound depends only on $m$ and $\|A\|_\infty$ and is independent of $n$ and $\ve b$. 
Under the nonnegativity constraints and provided that $m\ge 2$, a lower bound can be derived from an example given in \cite{EisenbrandShmonin2006}. 
%
\begin{theorem}[Lower bound on the discrete support function]\label{thm:LowerBound}
For any $\epsilon>0$ there exists a natural number $N_\epsilon$ such that the following holds.
Let $m,n\in\N$, where $\frac{n}{m}>N_\epsilon$.
Then, there exists a matrix $A\in\Z^{m \times n}$, and a vector $\ve b\in\Z^m$, such that Problem~\eqref{eq:mainProblem} is feasible and
$$|\supp(\ve z)| \ge m\log(\|A\|_\infty)^{\frac{1}{1+\epsilon}},$$
for all ${\ve z}\in \Z^{n}$ with $A{\ve z}={\ve b}$.
\end{theorem}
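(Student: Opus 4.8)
The plan is to construct an explicit family of systems $A\ve x = \ve b$ for which \emph{every} integer solution is forced to have large support, and then to invoke a counting / pigeonhole argument to certify that no sparse solution can exist. The natural mechanism is the following: if a vector $\ve z$ has support of size $s$, then $A\ve z$ lies in the set of integer combinations of $s$ columns of $A$; when the columns of $A$ are chosen so that every small subset of them spans only a ``thin'' sublattice (few residues modulo some integer, or small covolume), the set of attainable right-hand sides $A\ve z$ with $|\supp(\ve z)| \le s$ is small, while the number of possible $\ve b$ we want to hit is large. So first I would fix a target $\ve b$ that provably cannot be represented with small support.

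Concretely, I would take $A$ to have columns consisting of \emph{all} vectors in $\{0,1,\dots,h-1\}^m$ for a suitable parameter $h$, so that $n = h^m$ and $\|A\|_\infty = h-1$. Then $n/m = h^m/m$, which exceeds any prescribed $N_\epsilon$ once $h$ (equivalently $m$) is large, matching the hypothesis $n/m > N_\epsilon$. For the right-hand side I would pick $\ve b$ whose coordinate sum $\mathbf 1^T\ve b$ is a large integer $S$; since every column of $A$ has coordinate sum at most $m(h-1)$, any nonnegative integer representation $A\ve z = \ve b$ needs at least $S/(m(h-1))$ columns counted with multiplicity — but multiplicities do not help bound the support, so instead I would use a more refined invariant. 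The cleaner route is a counting bound: the number of distinct vectors of the form $A\ve z$ with $\ve z \ge \ve 0$, $\ve z \in \Z^n$, and $|\supp(\ve z)| \le s$ is at most $\binom{n}{s}$ times the number of nonnegative integer combinations of a fixed set of $s$ columns that land in a bounded box, which one can bound by something like $\binom{n}{s}\cdot(Ch\,m)^{s}$ or, restricting $\ve b$ to a box of side $T$, by $\binom{n}{s}(T+1)^{s}$. Choosing $T \approx h^m$ forces the count of representable $\ve b$ in the box to be a $o(1)$ fraction of $(T+1)^m$ unless $s \gtrsim m$; pushing the parameters, one gets $s \ge m\log(h)^{1/(1+\epsilon)}$ for $h$ large. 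The inequality $s\log(Th\,m) \gtrsim m\log T$ with $\log T \approx m\log h$ is what yields the stated exponent $\tfrac{1}{1+\epsilon}$ after absorbing the lower-order $\log m$ and $\log h$ terms into the $\epsilon$.

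I expect the main obstacle to be making the counting tight enough to produce the exponent $\frac{1}{1+\epsilon}$ rather than a weaker constant-factor bound: the crude estimate $\binom{n}{s} \le n^s = h^{ms}$ already contributes an $h^{ms}$ factor, which must be beaten by the number $(T+1)^m$ of targets, forcing $ms\log h \lesssim m\log T$, i.e. $s \lesssim \log T/\log h$; to turn this into a \emph{lower} bound on the minimum support one needs a pigeonhole in the other direction, so the roles of ``few representable points'' versus ``many targets'' must be set up carefully, and the box side $T$ tuned as a function of $h$ and $\epsilon$. A second technical point is ensuring the particular $\ve b$ witnessing the lower bound is itself representable at all (feasibility of Problem~\eqref{eq:mainProblem}); this is easy to arrange — e.g. take $\ve b = \sum_{i} \ve a_i$ over a large but explicit multiset of columns — but one must check the sparse-representation count genuinely excludes \emph{every} integer solution, including those with negative entries, which is why the theorem is phrased without the nonnegativity constraints and why the counting must be done over $\Z^n$, not $\Z_{\ge 0}^n$; over $\Z$ the set of $A\ve z$ with bounded support and bounded $\|\ve z\|_\infty$ is still controlled, and one lets $\|\ve z\|_\infty$ range freely by intersecting with a box for $\ve b$ and bounding how many integer points of a fixed $s$-dimensional sublattice meet that box via its covolume, which is at least $1$. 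Finally I would verify that the constructed $A$ can be taken of full row rank (it contains the identity among its columns) so the standing assumptions of the paper are met, and collect the parameter choices to state $N_\epsilon$ explicitly.
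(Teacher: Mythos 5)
There is a genuine gap, and it is structural rather than a matter of tightening constants. Your counting/pigeonhole argument compares the number of targets $\ve b$ in a box of side $T$, namely $(T+1)^m$, with the number of points reachable with support at most $s$, which you bound by $\binom{n}{s}$ times the number of points of a rank-$\min(s,m)$ sublattice in that box. Over $\Z^n$ (which is what the theorem requires) the reachable set for a fixed column subset $S$ is the full sublattice generated by those columns, and with covolume only bounded below by $1$ the box contains up to about $(2T+1)^{\min(s,m)}$ of its points. Consequently, as soon as $s\ge m$ the reachable count already exceeds the target count and no pigeonhole is possible; for $s<m$ your own inequality $s(\log n+\log T)\lesssim m\log T$ gives at best $s$ slightly below $m$. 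So this route can never certify a lower bound exceeding $m$, whereas the theorem demands $m\log(\|A\|_\infty)^{1/(1+\epsilon)}$, which is asymptotically much larger than $m$. Worse, your concrete matrix (all columns in $\{0,\dots,h-1\}^m$) contains the standard unit vectors, so \emph{every} $\ve b\in\Z^m$ is an integer combination of just $m$ columns: the minimum support over $\Z^n$ is at most $m$ for every right-hand side, and the claimed bound fails outright for this $A$. The volumetric viewpoint cannot be repaired by tuning $T$; what is needed is an arithmetic obstruction forcing every small column subset to generate a sublattice that misses the chosen $\ve b$, i.e.\ a divisibility condition, not a covolume condition. A secondary issue is that your witness $\ve b$ is produced nonconstructively, so arranging feasibility of Problem~\eqref{eq:mainProblem} (a nonnegative integer solution) for that same $\ve b$ is not addressed by taking ``$\ve b=\sum_i \ve a_i$,'' since that $\ve b$ need not be the one the pigeonhole produces.

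The paper's proof supplies exactly the missing divisibility mechanism. With $k=\lfloor n/m\rfloor$ and the first $k$ primes, set $q_j=p_k\#/p_j$; then $\gcd(q_1,\dots,q_k)=1$, so some $\ve\lambda\in\Z^k$ gives $\sum_j\lambda_j q_j=1$ with all $\lambda_j\neq 0$, and one builds a block matrix $A$ whose $i$-th row carries the (sign-adjusted) values $\bar q_1,\dots,\bar q_k$ on its own block of $k$ columns, with $\ve b=\ve 1$. Feasibility is explicit via $z^*_j=|\lambda_j|$ blockwise. The key point: if an integer solution has $z_j=0$ for some index $j$ in a block, then that row of $A\ve z$ is an integer combination of the remaining $q_i$, all divisible by the prime $p_j$, hence cannot equal $1$; so every integer solution (signs unrestricted, norms unrestricted) has all $km$ block entries nonzero. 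Since $\|A\|_\infty=p_k\#/2=e^{(1+o(1))k\ln k}$ by the Prime Number Theorem, one gets $\log(\|A\|_\infty)\le k^{1+\epsilon}$ for $n/m$ large, and thus support at least $km\ge m\log(\|A\|_\infty)^{1/(1+\epsilon)}$. You would need to replace your counting step by an obstruction of this kind for the proposal to work.
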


Our main results have three implications that we outline next:

First we consider  the \emph{integer hull} $P_I$ of the polyhedron $P=P(A, {\ve b})$, i.e., $P_I=\conv(P\cap\Z^n)$.
It was proved by Cook et al. \cite[Theorem 2.1]{CookHartmannKannanDiarmid} that for a generic rational polyhedron $P$ the number of vertices of $P_I$
is bounded by $2m^n(n^2\log(\|A\|_\infty))^{n-1}$.
For polyhedra in standard form Theorem~\ref{thm:UpperBound} leads to an upper bound on the number
of vertices of the integer hull of $P$ that is polynomial in $n$, provided that $m$ is a constant.

To see this, let $\ve z^*$ be a vertex of $P_I$.
There exists  $\ve c\in\Z^n$, such that $\ve z^*$ is the unique optimum with respect to Problem~\eqref{eq:mainProblem}.
Taking a vertex $\ve x^*$ of $P$ that gives an optimal solution of the corresponding continuous relaxation,
we can associate every vertex of $P_I$ to at least one vertex of $P$.
Note, that $P$ itself has no more than $n^m$ vertices.
By \cite[Theorem 6]{EisenbrandWeismantel}, we can assume that $\|\ve x^*-\ve z^*\|_1\le m(2m\|A\|_\infty+1)^m$, where
$\|\cdot\|_1$ denotes the $\ell_1$-norm.

From Theorem~\ref{thm:UpperBound} it follows that every vertex $\ve z^*$ of $P_I$ has support of size at most $2m\log(2\sqrt{m}\|A\|_\infty)$.
Thus, $P_I$ has no more  than
\begin{equation*} 
n^m \cdot n^{2m\log(2\sqrt{m}\|A\|_\infty)} \cdot (m(2m\|A\|_\infty+1)^m)^{2m\log(2\sqrt{m}\|A\|_\infty)}
\end{equation*}
vertices.
Summarizing, we obtain

\begin{corollary}
The number of vertices of the integer hull $P_I$ is bounded by
$$(mn\|A\|_\infty)^{O(m^2\log(\sqrt{m}\|A\|_\infty))}.$$
\end{corollary}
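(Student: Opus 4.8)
The plan is to assemble three ingredients already at hand: the structural fact that every vertex of $P_I$ is the \emph{unique} integer optimum of some integer program over $P$, the proximity bound of Eisenbrand and Weismantel, and the support bound of Theorem~\ref{thm:UpperBound}. First I would fix a vertex $\ve z^*$ of $P_I=\conv(P\cap\Z^n)$ and recall that, since $P_I$ is a rational polyhedron, there is an integral objective $\ve c\in\Z^n$ for which $\ve z^*$ is the unique optimal solution of Problem~\eqref{eq:mainProblem} (separate $\ve z^*$ from the other lattice points of $P$ by a rational hyperplane and clear denominators, perturbing if necessary so that $\ve z^*$ is the strict maximizer). Because this optimum is unique, the small-support optimal solution produced by Theorem~\ref{thm:UpperBound} for this $\ve c$ must be $\ve z^*$ itself, so $|\supp(\ve z^*)|\le s$ with $s:=2m\log(2\sqrt m\|A\|_\infty)$.

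Next I would control the location of $\ve z^*$. Choose a vertex $\ve x^*$ of $P$ that is optimal for the continuous relaxation with the same $\ve c$; this is possible because the uniqueness of the integer optimum forces the linear optimum to be finite and attained, so it is attained at a vertex. In this way every vertex of $P_I$ gets associated with at least one vertex of $P$, and since $A$ has full row rank $P$ has at most $\binom{n}{m}\le n^m$ vertices, indexed by the choices of $m$ basic columns. By \cite[Theorem~6]{EisenbrandWeismantel} we may assume $\|\ve x^*-\ve z^*\|_1\le m(2m\|A\|_\infty+1)^m=:\Delta$.

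The counting is then direct. A vertex $\ve z^*$ of $P_I$ is determined by: (i) a vertex $\ve x^*$ of $P$, at most $n^m$ choices; (ii) the set $\supp(\ve z^*)$, of size at most $s$, among the $n$ coordinates, at most $n^{s}$ choices; and (iii) for each $i\in\supp(\ve z^*)$ the integer value $z^*_i$, which by the proximity bound satisfies $|z^*_i-x^*_i|\le\Delta$ and hence ranges over at most $2\Delta+1$ values, giving at most $(2\Delta+1)^{s}$ choices. Multiplying yields at most
\[
n^m\cdot n^{s}\cdot \bigl(m(2m\|A\|_\infty+1)^m\bigr)^{s}
\]
vertices of $P_I$, exactly the displayed estimate preceding the statement. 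Substituting $s=2m\log(2\sqrt m\|A\|_\infty)$ and noting that the dominant exponent is $m\cdot s=O(m^2\log(\sqrt m\|A\|_\infty))$ while each base is at most a fixed power of $mn\|A\|_\infty$, the bound collapses to $(mn\|A\|_\infty)^{O(m^2\log(\sqrt m\|A\|_\infty))}$.

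The main obstacle is the first step rather than the arithmetic: one must argue cleanly that a vertex of $P_I$ is a strict integer maximizer for a genuinely \emph{integral} objective, and simultaneously that no unboundedness can occur in that direction (which is what legitimizes both the passage to a vertex $\ve x^*$ of $P$ and the invocation of Theorem~\ref{thm:UpperBound}, whose statement is about optimal solutions of a feasible, finite-valued problem). Once uniqueness and finiteness are secured, the proximity theorem of \cite{EisenbrandWeismantel} and the support bound of Theorem~\ref{thm:UpperBound} plug in mechanically, and what remains is the routine product-and-simplify estimate above.
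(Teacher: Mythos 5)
Your proposal is correct and follows essentially the same route as the paper: you associate each vertex $\ve z^*$ of $P_I$ with an integral objective making it the unique integer optimum (so the small-support solution of Theorem~\ref{thm:UpperBound} is $\ve z^*$ itself, giving $|\supp(\ve z^*)|\le 2m\log(2\sqrt{m}\|A\|_\infty)$), anchor it to one of the at most $n^m$ vertices of $P$ via the proximity bound of \cite{EisenbrandWeismantel}, and count support sets and coordinate values, exactly as in the displayed estimate preceding the corollary. The only differences are cosmetic: you make explicit the uniqueness/finiteness points the paper leaves implicit, and your $(2\Delta+1)$ per coordinate versus the paper's $\Delta$ is absorbed by the $O(\cdot)$ in the exponent.
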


A second consequence of Theorem \ref{thm:UpperBound} is the following extension to mixed integer optimization problems.

\begin{corollary}
There exists  an optimal solution $\ve z^*$ for the mixed-integer optimization problem
\[
\max\left\{ \ve c^T\ve x : A\ve x = \ve b, \, \ve x\ge\ve 0, \,\ve x \in \Z^n \times \R^d\right\},
\]
		such that
	$$
	|\supp(\ve z^*)| \le 2 m + \log\left(g^{-1}\sqrt{\det(AA^T)}\right) \le  m+2m\log(2\sqrt{m}\|A\|_\infty).
$$
\end{corollary}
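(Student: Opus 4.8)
The plan is to reduce the two blocks of variables one at a time --- the continuous block by the vertex argument of linear programming, the integer block by Theorem~\ref{thm:UpperBound}. Split the constraint matrix as $A=[A'\mid A'']$, where the $m\times n$ block $A'$ acts on the integer variables $\ve u\in\Z^n$ and the $m\times d$ block $A''$ on the continuous variables $\ve v\in\R^d$, and split the cost vector accordingly as $\ve c=(\ve c',\ve c'')$; let $(\ve u^*,\ve v^*)$ be an optimal solution.

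First I would replace $\ve v^*$ by an optimal vertex $\ve w^*$ of the linear program $\max\{(\ve c'')^T\ve y:A''\ve y=\ve b-A'\ve u^*,\ \ve y\ge\ve 0\}$. This program is feasible, since $\ve v^*$ is feasible for it, and bounded, since an unbounded direction would make the original mixed-integer problem unbounded; hence $\ve w^*$ exists, $|\supp(\ve w^*)|\le\rank(A'')\le m$, and $(\ve u^*,\ve w^*)$ is again optimal. Now freeze the continuous block at $\ve w^*$ and set $\ve b^\circ:=\ve b-A''\ve w^*=A'\ve u^*$. The key observation is that $\ve b^\circ$ is \emph{integral}, so that $\ve u^*$ is an optimal solution of the \emph{integer} program $\max\{(\ve c')^T\ve x:A'\ve x=\ve b^\circ,\ \ve x\ge\ve 0,\ \ve x\in\Z^n\}$ --- any feasible $\ve x$ of strictly larger objective value would turn $(\ve x,\ve w^*)$ into a better feasible solution of the mixed-integer problem. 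If $A'$ does not have full row rank, I would discard a maximal set of linearly dependent rows first; this is harmless, because for a right-hand side of the form $A'\ve u^*$ the discarded equations are automatically satisfied by every integral point satisfying the remaining ones. Applying Theorem~\ref{thm:UpperBound} to the resulting system --- with full-row-rank matrix $\tilde A'$ of size $\rho\times n$, $\rho=\rank(A')\le m$, and $g_1$ the gcd of its maximal minors --- produces an optimal $\ve z^\circ$ with $|\supp(\ve z^\circ)|\le\rho+\log\bigl(g_1^{-1}\sqrt{\det(\tilde A'(\tilde A')^T)}\bigr)$. Then $(\ve z^\circ,\ve w^*)$ is an optimal solution of the mixed-integer problem, and since the integer and continuous variables occupy disjoint coordinates, $|\supp(\ve z^\circ,\ve w^*)|=|\supp(\ve z^\circ)|+|\supp(\ve w^*)|$.

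It remains to bound this sum. For the explicit bound I would apply Theorem~\ref{thm:UpperBound} to $\tilde A'$ and use the monotonicity of $t\mapsto 2t\log(2\sqrt t\,\|A\|_\infty)$: this gives $|\supp(\ve z^\circ)|\le 2\rho\log(2\sqrt\rho\,\|\tilde A'\|_\infty)\le 2m\log(2\sqrt m\,\|A\|_\infty)$, and adding $|\supp(\ve w^*)|\le m$ yields $m+2m\log(2\sqrt m\,\|A\|_\infty)$. For the sharper bound $2m+\log\bigl(g^{-1}\sqrt{\det(AA^T)}\bigr)$, with $g$ the gcd of the $m\times m$ minors of $A$, I would combine $|\supp(\ve z^\circ)|\le\rho+\log\bigl(g_1^{-1}\sqrt{\det(\tilde A'(\tilde A')^T)}\bigr)$ and $|\supp(\ve w^*)|\le\rank(A'')$ with the bounds $\rho+\rank(A'')\le 2m$ and $g_1^{-1}\sqrt{\det(\tilde A'(\tilde A')^T)}\le g^{-1}\sqrt{\det(AA^T)}$. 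I expect this last determinantal comparison to be the one step requiring genuine care: when $A'$ already has full row rank it is immediate from the Cauchy--Binet formula (the $m\times m$ minors of $A'$ are among those of $A$, so $\det(A'(A')^T)\le\det(AA^T)$ and $g\mid g_1$), while in general one must relate the $\rho\times\rho$ minors of a row-and-column submatrix of $A$ to the $m\times m$ minors of $A$. (That the sharper estimate in turn implies the explicit one is just the bound $m+\log(g^{-1}\sqrt{\det(AA^T)})\le 2m\log(2\sqrt m\,\|A\|_\infty)$ of Theorem~\ref{thm:UpperBound} with $m$ added to both sides.)
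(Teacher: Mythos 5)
Your decomposition is exactly the natural route to this corollary, which the paper states without proof: fix the integer block, replace the continuous block by an optimal vertex of the resulting standard-form LP (support at most $m$, and the LP is pointed because of the nonnegativity constraints), observe that $\ve b-A''\ve w^*=A'\ve u^*$ is integral so that the frozen problem is a pure integer program of the form~\eqref{eq:mainProblem}, and apply Theorem~\ref{thm:UpperBound} to it; the $2m$ in the bound is precisely ``$m$ from the LP vertex plus the $m$ from Theorem~\ref{thm:UpperBound}''. The only step you leave open --- comparing $\rho+\log\bigl(g_1^{-1}\sqrt{\det(\tilde A'(\tilde A')^T)}\bigr)$ for the column-and-row submatrix $\tilde A'$ with the corresponding quantity for $A$ --- is exactly what \cite[Lemma~2.3]{ADOO} provides, and the paper itself invokes that lemma for the identical reduction (restricting to a column subset and deleting dependent rows) inside the proof of Theorem~\ref{thm:UpperBound}. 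Note also that you do not need the stronger multiplicative inequality $g_1^{-1}\sqrt{\det(\tilde A'(\tilde A')^T)}\le g^{-1}\sqrt{\det(AA^T)}$ that you propose to prove via Cauchy--Binet (and which is only immediate when $\rank(A')=m$): the additive form $\rho+\log\bigl(g_1^{-1}\sqrt{\det(\tilde A'(\tilde A')^T)}\bigr)\le m+\log\bigl(g^{-1}\sqrt{\det(AA^T)}\bigr)$ from that lemma, together with $|\supp(\ve w^*)|\le m$, already gives $2m+\log\bigl(g^{-1}\sqrt{\det(AA^T)}\bigr)$, so the rank deficiency of $A'$ is absorbed by the $2m$ budget. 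With that citation supplied, your argument is complete, and your derivation of the explicit bound $m+2m\log(2\sqrt{m}\|A\|_\infty)$ from the second inequality of Theorem~\ref{thm:UpperBound} is fine.
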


Given a positive integer $k$, an integer matrix $A$ is called $k$-{\em modular} if all its absolute subdeterminants are bounded by $k$. Theorem \ref{thm:UpperBound} immediately implies the following result.

\begin{corollary}
Fix a positive integer $k$. For a $k$-modular matrix $A$, Problem~\eqref{eq:mainProblem} has an optimal solution with the size of support bounded by a polynomial in $m$.
\end{corollary}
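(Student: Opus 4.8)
The plan is to deduce this corollary directly from the second inequality in Theorem~\ref{thm:UpperBound}. The only observation needed is that $k$-modularity bounds the entries of $A$: each entry $a_{ij}$ is itself a $1\times 1$ minor of $A$, hence $|a_{ij}|\le k$, and therefore $\|A\|_\infty\le k$.

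With this in hand, I would apply Theorem~\ref{thm:UpperBound} to Problem~\eqref{eq:mainProblem} (after the standing reduction to a full-row-rank matrix, which only deletes redundant rows and so keeps $A$ $k$-modular and the problem feasible). It yields an optimal solution $\ve z^*$ with
$$|\supp(\ve z^*)|\le 2m\log(2\sqrt{m}\,\|A\|_\infty)\le 2m\log(2\sqrt{m}\,k).$$
For fixed $k$ the right-hand side is $O(m\log m)$, in particular a polynomial in $m$ that is independent of $n$ and $\ve b$, which is exactly the assertion.

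A tempting alternative is to use the sharper first bound $|\supp(\ve z^*)|\le m+\log\bigl(g^{-1}\sqrt{\det(AA^T)}\bigr)$ together with the Cauchy--Binet identity $\det(AA^T)=\sum_{B}\det(A_B)^2$, where $B$ ranges over the $m$-element subsets of columns; since $|\det(A_B)|\le k$ this gives $\sqrt{\det(AA^T)}\le k\sqrt{\binom{n}{m}}$. However, the resulting $\tfrac12\log\binom{n}{m}$ term still depends on $n$, so it does not by itself produce a bound in $m$ alone; the route through $\|A\|_\infty$ is the clean one. There is essentially no obstacle here — the corollary is a one-line specialization of Theorem~\ref{thm:UpperBound}, and the only points worth verifying are that passing to a full-row-rank submatrix preserves both $k$-modularity (every surviving subdeterminant was already a subdeterminant of $A$) and feasibility, both of which are immediate.
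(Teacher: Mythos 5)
Your proof is correct and matches the paper's intended argument: the paper states the corollary as an immediate consequence of Theorem~\ref{thm:UpperBound}, precisely via the observation that every entry of a $k$-modular matrix is a $1\times 1$ minor, so $\|A\|_\infty\le k$ and the bound $2m\log(2\sqrt{m}\,k)=O(m\log m)$ follows. Your side remarks (why the Cauchy--Binet route alone does not suffice, and that passing to a full-row-rank submatrix preserves $k$-modularity) are accurate but not needed beyond what you already concluded.
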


\section*{Proofs of Theorem~\ref{thm:UpperBound} and Theorem~\ref{thm:LowerBound}}

We will describe the refinement of the proofs established in  \cite{EisenbrandShmonin2006} and \cite{ADOO}.
In the proofs of both of these results the key idea is to show that if the support of a feasible solution is large, specifically if
the bound \eqref{eq:feasibilityBound} is violated, then there exists a certain nonzero integer vector $\ve y\in\{-1,0,1\}^n$
in  $\ker(A):=\{{\ve x}\in \R^n: A\ve x= \ve 0\}$. This vector can then be used to perturb the feasible solution and decrease its support.
We show that if  \eqref{t:Bound_via_determinant_bound1} is violated, then there exists such a vector $\ve y$ that
satisfies additionally $\ve c^T \ve y=0$. This allows us to perturb an optimal solution and decrease its support
while remaining optimal. As in \cite{ADOO}, we will utilize the following result by Bombieri and Vaaler.

\begin{theorem}[{\cite[Theorem~2]{BombVaal}}]\label{eq:sl_sl_f}
There exist $n - m$ linearly independent integral vectors ${\ve y}_1, \ldots,{\ve y}_{n-m} \in \ker(A)\cap\Z^n$ satisfying
\begin{equation*}
 \prod_{i=1}^{n-m} \| {\ve y}_i\|_{\infty} \le g^{-1}\sqrt{{\rm det}(AA^{T})}\,,
 \end{equation*}
where $g$ is the greatest common divisor of all $m \times m$ subdeterminants of $A$.
\end{theorem}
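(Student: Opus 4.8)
The plan is to realize the required vectors as lattice vectors attaining the successive minima of the integer kernel lattice, and to control their $\ell_\infty$-lengths by Minkowski's second theorem sharpened with a cube-slicing volume bound. The constant-free shape of the inequality is the whole point, so the argument must track the dimensional constant $2^{n-m}$ and cancel it exactly.

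First I would set up the lattice $\Lambda := \ker(A)\cap\Z^n$, which has rank $r := n-m$ because $A$ has full row rank. The key preliminary is the determinant identity $\det(\Lambda) = g^{-1}\sqrt{\det(AA^T)}$. The lattice $\Lambda$ is primitive (if $k\ve x\in\ker(A)$ for some $\ve x\in\Z^n$ and $k\neq 0$, then already $\ve x\in\Lambda$), so by the standard duality for complementary primitive sublattices of $\Z^n$ one has $\det(\Lambda)=\det(\Lambda^\perp)$, where $\Lambda^\perp=\ker(A)^\perp\cap\Z^n$ is the saturation of the row lattice $L(A)$. Writing $A=UDV$ in Smith normal form with $U,V$ unimodular and $D=[\mathrm{diag}(d_1,\ldots,d_m)\,|\,0]$, the rows $v_1,\ldots,v_n$ of $V$ form a $\Z$-basis of $\Z^n$, so $\Lambda^\perp=\Z v_1+\cdots+\Z v_m$ while $L(A)=\Z\,d_1v_1+\cdots+\Z\,d_mv_m$. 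Hence $[\Lambda^\perp:L(A)]=d_1\cdots d_m$, which equals the gcd $g$ of the maximal minors of $A$. Since $\det(L(A))=\sqrt{\det(AA^T)}$ is the Gram determinant of the rows of $A$, dividing by the index yields $\det(\Lambda)=\det(\Lambda^\perp)=g^{-1}\sqrt{\det(AA^T)}$.

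Next I would pass to successive minima. Let $V:=\ker(A)$, viewed as an $r$-dimensional subspace of $\R^n$, and set $C:=[-1,1]^n\cap V$, a symmetric convex body in $V$ whose gauge is exactly the $\ell_\infty$-norm restricted to $V$. Then the $i$-th successive minimum $\lambda_i=\lambda_i(C,\Lambda)$ is the least $t$ for which $\{\ve x\in V:\|\ve x\|_\infty\le t\}$ contains $i$ linearly independent points of $\Lambda$, and these minima are attained by linearly independent $\ve y_1,\ldots,\ve y_r\in\Lambda$ with $\|\ve y_i\|_\infty=\lambda_i$. Applying Minkowski's second theorem inside the $r$-dimensional space $V$ gives
$$\prod_{i=1}^{r}\|\ve y_i\|_\infty=\prod_{i=1}^{r}\lambda_i\le\frac{2^{r}\,\det(\Lambda)}{\mathrm{vol}_r(C)}.$$

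The crux is to show $\mathrm{vol}_r(C)\ge 2^{r}$, which is precisely what erases the dimensional constant. This is Vaaler's cube-slicing inequality: every central $r$-dimensional section of the unit cube $[-\tfrac12,\tfrac12]^n$ has $r$-dimensional volume at least $1$, so the corresponding section of $[-1,1]^n$ has volume at least $2^{r}$. Combining this with the displayed bound gives $\prod_{i}\|\ve y_i\|_\infty\le\det(\Lambda)=g^{-1}\sqrt{\det(AA^T)}$, and the $\ve y_1,\ldots,\ve y_r\in\ker(A)\cap\Z^n$ are the sought linearly independent integral vectors. I expect the main obstacle to be exactly this constant-free volume estimate: Minkowski's theorem by itself only produces the weaker bound carrying the factor $2^{r}/\mathrm{vol}_r(C)$, and it is Vaaler's nontrivial convex-geometric lower bound that forces this factor to be at most $1$. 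The determinant identity, though elementary, is the second point needing care, since it rests on the primitive-lattice duality together with the Smith-normal-form computation of the index $g$.
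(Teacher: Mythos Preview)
The paper does not prove this statement at all; it is quoted as a black box from Bombieri and Vaaler \cite{BombVaal} and then invoked in the proof of Theorem~\ref{thm:UpperBound}. Your sketch goes well beyond the paper by actually outlining a proof, and the outline is correct. In fact it is essentially the original Bombieri--Vaaler argument: the determinant identity $\det(\Lambda)=g^{-1}\sqrt{\det(AA^T)}$ for the primitive kernel lattice (which you derive via Smith normal form and the duality $\det(\Lambda)=\det(\Lambda^\perp)$ for complementary primitive sublattices of $\Z^n$), Minkowski's second theorem applied inside the subspace $V=\ker(A)$ to the section $C=[-1,1]^n\cap V$, and Vaaler's cube-slicing inequality $\mathrm{vol}_r(C)\ge 2^r$ to cancel the factor $2^r$. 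The only cosmetic point worth tightening is the Smith-normal-form step: you should state explicitly that the product $d_1\cdots d_m$ of the invariant factors equals the gcd of all $m\times m$ minors (this is the standard characterization of the $k$-th determinantal divisor), since that is what makes the index $[\Lambda^\perp:L(A)]$ equal to $g$.
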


\begin{proof}[Proof of Theorem~\ref{thm:UpperBound}]
In \cite[Theorem 1.1 (ii)]{ADOO} it was verified that the second inequality in \eqref{t:Bound_via_determinant_bound1} always holds.
Therefore we only need to prove that the first inequality in \eqref{t:Bound_via_determinant_bound1} holds.
For that, let $\ve z^*$ be an optimal solution for Problem~\eqref{eq:mainProblem} with minimal support.

First, we argue that it suffices to consider the case $|\supp(\ve z^*)|=n$.
Suppose that $|\supp(\ve z^*)|<n$. For $S=\supp(\ve z^*)$ define
$\bar A=A_S$,  the submatrix of $A$ whose columns are labeled by the indices of $S$.
Let $\bar{\ve b}=\ve b$, $\bar{\ve c}=\ve c_S$, and $\bar{\ve z}^*=\ve z^*_S$.
By removing linearly dependent rows from $\bar A \bar{\ve x}=\bar{\ve b}$, we may assume that $\bar A$ has full row rank. Let $\bar m = \rank(\bar A) \le m$.
Observe that  $\bar{\ve z}^*$ is an optimal solution for the corresponding Problem~\eqref{eq:mainProblem} with minimal support.
Furthermore, note that $\bar{\ve z}^*$ has full support.
Now, if \eqref{t:Bound_via_determinant_bound1} holds true for $\bar{\ve z}^*$, then by \cite[Lemma~2.3]{ADOO} we have
$$|\supp(\ve z^*)|=|\supp(\bar{\ve z}^*)|\le \bar m + \bar g^{-1}\sqrt{\det(\bar A\bar A^T)}\le m + g^{-1}\sqrt{\det(AA^T)},$$
where $\bar g$ denotes the greatest common divisor of all $\bar m \times \bar m$ minors of $\bar A$.

From now on, suppose that $|\supp(\ve z^*)|=n > m + \log(g^{-1}\sqrt{\det(AA^T)})$. This inequality implies that $g^{-1}\sqrt{\det(AA^T)}<2^{n-m}$.
By Theorem \ref{eq:sl_sl_f}, there exists a vector $\ve y\in\Z^n\setminus\{\ve 0\}$ such that
$$A\ve y=\ve 0 \;\; \text{ and } \;\; \|\ve y\|_\infty\le\left(g^{-1}\sqrt{\det(AA^T)}\right)^{1/(n-m)}<2.$$
In particular, this implies that $y_i\in\{-1,0,1\}$ for all $i\in\{1,\ldots,n\}$.
Since $z^*_i > 0$ for all $i \in \{1,\ldots,n\}$, it follows that $\ve z^* + \ve y$ and $\ve z^* - \ve y$ are feasible.
As $\ve z^*$ is optimal, it also follows that $\ve c^T\ve y = 0$. Otherwise $\ve c^T(\ve z^* + \ve y)$ or $\ve c^T(\ve z^* - \ve y)$ would be larger.

We may assume that $y_i<0$ for at least one $i$. Otherwise, replace $\ve y$ by $-\ve y$.
Let $\lambda=\min\{z^*_i : y_i<0\}$.
Now, by construction $\ve z^* + \lambda \ve y$ is an optimal solution to \eqref{eq:mainProblem} with support strictly smaller than $n$.
The obtained contradiction completes the proof.
\end{proof}

Now we discuss the proof of Theorem~\ref{thm:LowerBound}.
Assuming that the columns of $A$ form a Hilbert basis, Cook et al. \cite{MR830593} showed that the support of an optimal solution can be bounded solely in terms of $m$.
In the same article the authors constructed an example showing that such a result cannot hold for more general $A$. We extend their construction as follows.

Let $p_i\in\N$ denote the $i$-th prime number. The $i$-th \emph{primorial} $p_i\#$ is defined as the product of the first $i$ prime numbers, that is
$$p_i\#:=\prod_{j=1}^i p_j.$$
Based on the Prime Number Theorem (see e.g., \cite[Theorem 6]{hardy2008introduction}) the asymptotical growth of primorials is well known to be
\begin{equation}\label{asymptoticalGrowthOfPrimorials}
p_i\#=e^{(1+o(1))i\ln(i)},
\end{equation}
where $\ln(\cdot)$ denotes the natural logarithm (see \cite[Sequence A002110]{Sloane}).

\begin{proof}[Proof of Theorem~\ref{thm:LowerBound}]
Let $k=\lfloor n/m \rfloor$ and let $p_1,\ldots,p_k\in\N$ be the first $k$ prime numbers.
We define
$$q_i:=\frac{p_k\#}{p_i}.$$
By construction, $\gcd(q_1,\ldots,q_k)=1$.
Thus, there exists a $\ve \lambda\in\Z^k$ such that $\sum_{j=1}^k \lambda_j q_j=1$.
Since, $\gcd(q_1,\ldots,q_{j-1},q_{j+1},\ldots,q_k)=p_j$ for any $j\in\{1,\ldots,k\}$, it
must hold that $\lambda_j\neq0$ for all $j\in\{1,\ldots,k\}$.
We set $\bar q_j:=-q_j$ if $\lambda_j<0$ and $\bar q_j:=q_j$ if $\lambda_j>0$.

Now, let $A\in\Z^{m \times n}$ be a matrix with 
$$A_{ij}=\begin{cases} \bar q_{j+k-ik}& \text{if } k(i-1)< j \le ik,\\
0&\text{otherwise}.\end{cases}$$

Further, let ${\ve 1 }$ denote the all-one $m$-dimensional vector. 
The vector $\ve z^*\in\Z^n$ with entries $z_i^*=|\lambda_{i-k\lfloor (i-1)/k \rfloor}|$ for $i\in\{1,\ldots,km\}$ and $z_i^*=0$ for $i\in\{km+1,\ldots,n\}$ is strictly positive in the first $km$ entries and satisfies $A\ve z^* = {\ve 1 }$.
So, Problem~\eqref{eq:mainProblem} with ${\ve b}={\ve 1}$ is feasible.
However, no integer vector $\ve z\in\Z^n$ exists that satisfies $A\ve z = {\ve 1 }$ and $|\supp(\ve z)|<km$.

It remains to note that by \eqref{asymptoticalGrowthOfPrimorials} $\|A\|_\infty=q_1=\frac{p_k\#}{2}=\frac{1}{2}e^{(1+o(1))k\ln(k)}$.
Thus, $\log(\|A\|_\infty)=\frac{\log(e)}{2}(1+o(1))k\ln(k)$.
Provided we have chosen $N_\epsilon$ sufficiently large, we have $\frac{\log(e)}{2}(1+o(1))k\ln(k)\le k^{1+\epsilon}$,
and therefore $|\supp(\ve z)| = m  k \ge m \log(\|A\|_\infty)^{1/(1+\epsilon)}$.
\end{proof}

\vskip .3cm

\noindent {\bf Acknowledgements:}  This work was partially supported by NSF grant DMS-1440140, 
while the second author was in residence at the Mathematical Sciences Research Institute in Berkeley, 
California, during the Fall 2017. The second author was also partially supported by NSF grant DMS-1522158.

The third author is supported by the Swiss National Science Foundation
(SNSF) within the project \emph{Convexity, geometry of numbers, and
  the complexity of integer programming (Nr. 163071)}. This work was done in part while the third author was visiting the 
  Simons Institute for the Theory of Computing, partially supported by the DIMACS/Simons Collaboration on 
  Bridging Continuous and Discrete Optimization through NSF grant \#CCF-1740425.

The work of the fifth author was supported by an Alexander von Humboldt research award.

\small
\bibliographystyle{plain}
\bibliography{bib-3}

\end{document}